\theoremstyle{plain}
\newtheorem{theorem}{Theorem}[section]
\newtheorem{prop}[theorem]{Proposition}
\newtheorem{corollary}[theorem]{Corollary}
\newtheorem{lemma}[theorem]{Lemma}
\newtheorem{Theorem}{Main Theorem}
\theoremstyle{definition}
\newtheorem{remark}[theorem]{Remark}
\newtheorem{definition}[theorem]{Definition}
\newtheorem{example}[theorem]{Example}
\newcommand{\C}{\mathbb{C}}
\newcommand{\R}{\mathbb{R}}
\newcommand{\Z}{\mathbb{Z}}
\newcommand{\CP}{\mathbb{C}\mathrm{P}}
\newcommand{\CO}{\mathcal O}
\newcommand{\CL}{\mathcal L}
\renewcommand{\tilde}{\widetilde}
\renewcommand{\setminus}{\smallsetminus}
\newcommand{\nin}{/\kern-2.1ex\in}
\newcommand{\abs}[1]{\lvert#1\rvert}
\newcommand{\norm}[1]{\lVert#1\rVert}
\def\<{\left\langle}
\def\>{\right\rangle}
\def\ind{\operatorname{ind}}
\def\grad{\operatorname{grad}}
\def\Lie{\operatorname{Lie}}
\numberwithin{equation}{section}
\title[Equivariant local index in the symplectic cutting]{The equivariant local index of the reduced space in the symplectic cutting}
 \author[T. Yoshida]{Takahiko Yoshida}
\subjclass[2010]{Primary 19K56; Secondary 53D20} 
\keywords{Equivariant local index, symplectic cut}
\thanks{Partly supported by Grant-in-Aid for Scientific Research (C) 24540095.}
\address{Department of Mathematics, School of Science and Technology, Meiji University, 1-1-1 Higashimita, Tama-ku, Kawasaki, 214-8571, Japan}
\email{takahiko@meiji.ac.jp}
\begin{document}
\begin{abstract}
We compute the equivariant local index for the reduced space in a symplectic cut space, provided that the reduced space is compact. 
\end{abstract}
\maketitle

\section{Introduction}
In the joint work~\cite{FFY1,FFY2,FFY3} with Fujita and Furuta we developed an index theory for Dirac-type operators on possibly non-compact Riemannian manifolds provided that the end has an open covering $\{V_\alpha\}_{\alpha\in A}$ and each $V_\alpha$ is equipped with a torus fiber bundle structure $\pi_\alpha\colon V_\alpha\to U_\alpha$ and a Dirac-type operator $D_\alpha$ along fiber of $\pi_\alpha$ which satisfies certain acyclic condition. We call the index in our theory the {\it local index} and also call its equivariant version the {\it equivariant local index}. The local index has several properties, such as deformation invariance, excision property, and a normalization property that says the local index coincides with the analytic index of the Dirac-type operator when the manifold is closed. 

The theory of local index allows us to deal with the geometric quantizations of Lagrangian fibrations and that of Hamiltonian torus actions uniformly in the following sense. By applying the theory to a prequantizable Lagrangian fibration with closed total space we can show that the Riemann-Roch index is equal to the number of Bohr-Sommerfeld fibers and the sum of contributions of singular fibers~\cite{FFY1,FFY2}. In particular, when the Lagrangian fibration is nonsingular this implies that the Spin${}^c$ quantization is equivalent to the geometric quantization using the real polarization, at least in the level of the dimension of the quantum Hilbert space. 

The moment map of a Hamiltonian torus action can be thought of as a singular isotropic fibration. We can also apply the theory to this case and obtain the similar result. In particular, for a Hamiltonian $S^1$-action on a prequantizable closed symplectic manifold we can show that the equivariant Riemann-Roch index can be expressed as the sum of the equivariant local indices for the inverse images of the integer lattice points by the moment map. When the lattice point is a regular value of the moment map we can compute its equivariant local index and it is equal to the Riemann-Roch index of the symplectic quotient at the lattice point~\cite{FFY2,FFY3,Y8}. This gives a new proof of the Guillemin-Sternberg conjecture concerning the quantization commutes with the reduction for the Hamiltonian $S^1$ action. So the problem is how to compute the equivariant local index when the lattice point is a critical value. 

In this paper, as a special case of critical lattice points we compute the equivariant local index for the reduced space in a symplectic cut space. Let $(M,\omega)$ be a Hamiltonian $S^1$-manifold with moment map $\mu\colon M\to \R$. In this paper we identify the Lie algebra $\Lie (S^1)$ of $S^1$ with $\R$ by $\R\ni \theta \mapsto 2\pi\sqrt{-1}\theta\in \Lie(S^1)$, and also identify $\R^*$ with $\R$ by the standard inner product. Suppose that $n\in \R$ a regular value of $\mu$ and $S^1$ acts on the level set $\mu^{-1}(n)$ freely. We denote the reduced space $\mu^{-1}(n)/S^1$ by $M_n$. The symplectic cut is a procedure to make a new Hamiltonian $S^1$-manifold $\overline{M}_{\mu\le n}$, called the cut space, from the given $(M,\omega)$ and $n$. This method is introduced by Lerman in~\cite{Lerman} and the reduced space $M_n$ is naturally contained in the cut space $\overline{M}_{\mu\le n}$ as a fixed point set. In particular, $M_n$ is a singular fiber of the moment map of the $S^1$-action on $\overline{M}_{\mu\le n}$. 
Moreover, if the Hamiltonian $S^1$-manifold $(M,\omega)$ is prequantizable and the regular value $n$ is integer, then the cut space $\overline{M}_{\mu\le n}$ is also prequantizable, and a prequantum line bundle $(L,\nabla^L)$ on $(M,\omega)$ induces the prequantum line bundle $\left(\overline{L},\nabla^{\overline{L}}\right)$ on $\overline{M}_{\mu\le n}$. Suppose also that $\mu^{-1}(n)$ is compact. Let $O$ be an $S^1$-invariant open neighborhood of $M_n$ in $\overline{M}_{\mu\le n}$. Then the equivariant local index $\ind_{S^1}\left(O,O\setminus M_n;\overline{L}|_O\right)$ can be defined. The purpose of this paper is to give the following formula for $\ind_{S^1}\left(O,O\setminus M_n;\overline{L}|_O\right)$ which is Theorem~\ref{Main}. 
\begin{Theorem}
\[
\ind_{S^1}\left(O,O\setminus M_n;\overline{L}|_O\right)=\ind (M_n;L_n)\C_{(n)},
\]
where $L_n$ is the restriction of $\overline{L}$ to $M_n$, $\ind (M_n;L_n)$ is the Riemann-Roch index, and $\C_{(n)}$ is the irreducible representation of $S^1$ with weight $n$. 
\end{Theorem}
 This is an equivariant refinement of \cite[Theorem~5.4]{FFY4}, in which we give a formula for the local index for a reduced space in a symplectic cut space in order to determine the local index of the zero section of the cotangent bundle of the $n$-dimensional sphere $S^n$ by using the geodesic flow. 

 This paper is organized as follows. In Section~\ref{local index} we recall the equivariant local index in the case of the Hamiltonian $S^1$-actions. After that we briefly recall the symplectic cut in Section~\ref{symplectic cut}. The main theorem (Theorem~\ref{Main}) is given in Section~\ref{main theorem}. Finally examples are given in Section~\ref{examples}. 

\subsection*{Notation}
In this paper we use the notation $\C_{(n)}$ for the irreducible representation of $S^1$ with weight $n$. 

\section{Equivariant local index}\label{local index}
Let $(M,\omega)$ be a prequantizable Hamiltonian $S^1$-manifold and $(L,\nabla^L)$ an $S^1$-equivariant prequantum line bundle on $(M,\omega)$. 
We do not assume $M$ is compact. Since all orbits are isotropic the restriction of $(L,\nabla^L)$ to each orbit is flat. 
\begin{definition}
An orbit $\CO$ is said to be {\it $L$-acyclic} if $H^0\left(\CO; (L,\nabla^L)|_{\CO}\right)=0$. 
\end{definition}
 
 Let $V$ be an $S^1$-invariant open set whose complement is compact and which contains only $L$-acyclic orbits. For these data we give the following theorem. 
 \begin{theorem}[\cite{FFY1,FFY2,FFY3}]
There exists an element $\ind_{S^1}\left(M,V;L\right)\in R(S^1)$ of the representation ring such that $\ind_{S^1}\left(M,V;L\right)$ satisfies the following properties:
\begin{enumerate}
\item $\ind_{S^1}\left(M,V;L\right)$ is invariant under continuous deformation of the data. 
\item If $M$ is closed, then $\ind_{S^1}\left(M,V;L\right)$ is equal to the equivariant Riemann-Roch index $\ind_{S^1}\left(M;L\right)$. 
\item If $M'$ is an $S^1$-invariant open neighborhood of $M\setminus V$, then $\ind_{S^1}\left(M,V;L\right)$ satisfies the following excision property
\[
\ind_{S^1}\left(M,V;L\right)=\ind_{S^1}\left(M',M'\cap V;L|_{M'}\right).
\] 
\item $\ind_{S^1}\left(M,V;L\right)$ satisfies a product formula. 
\end{enumerate}
\end{theorem}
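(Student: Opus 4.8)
The plan is to realize $\ind_{S^1}(M,V;L)$ as the $S^1$-equivariant Fredholm index of a $\mathrm{Spin}^c$ Dirac operator modified near infinity in $V$. First I would fix an $S^1$-invariant Riemannian metric and an $S^1$-invariant compatible almost complex structure on $(M,\omega)$ and form the Dolbeault--Dirac operator
\[
D_L\colon\Omega^{0,\mathrm{even}}(M;L)\longrightarrow\Omega^{0,\mathrm{odd}}(M;L)
\]
coupled to $(L,\nabla^L)$; when $M$ is closed its $S^1$-index is, by the equivariant Atiyah--Singer theorem, the equivariant Riemann--Roch index $\ind_{S^1}(M;L)$. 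On $V$ the $S^1$-action furnishes a model for $D_L$ at infinity. Writing $v$ for the vector field generated by the action (it has no zeros on $V$, since an $L$-acyclic orbit cannot be a fixed point), the Kostant relation $\nabla^L_v=\mathcal L_v-2\pi\sqrt{-1}\,\mu$ identifies the component of $D_L$ in the orbit directions, restricted to an orbit $\CO$, with an operator built from $\nabla^L_v$ whose kernel is $H^0(\CO;(L,\nabla^L)|_\CO)$. Hence over $V$, where every orbit is $L$-acyclic, this ``vertical'' operator is a family of invertible self-adjoint operators along the orbits with a locally uniform spectral gap. Using an $S^1$-invariant cutoff I would interpolate between $D_L$ over the compact set $M\setminus V$ and its vertical-plus-transverse model over the end of $V$, producing an $S^1$-invariant first-order operator $D_L^V$.

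The analytic core is to prove that $D_L^V$ is Fredholm on suitable Sobolev completions: over the end of $V$ it consists of a family of invertible operators along the orbit directions---with a uniform spectral gap coming from $L$-acyclicity---together with a transverse elliptic part, so $(D_L^V)^*D_L^V$ is bounded below there, which with interior ellipticity makes $D_L^V$ Fredholm with finite-dimensional kernel and cokernel. As the whole construction is $S^1$-equivariant, these are finite-dimensional $S^1$-representations, and one sets
\[
\ind_{S^1}(M,V;L):=[\Ker D_L^V]-[\operatorname{coker}D_L^V]\in R(S^1).
\]
Property~(1) then follows because the metric, almost complex structure, cutoff, and model at infinity all vary over a connected space of admissible data on which the index, being that of a continuous family of Fredholm operators, is locally constant---which also shows independence of the auxiliary choices. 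For property~(2), when $M$ is closed there is no end to model, so $D_L^V$ deforms through admissible data to $D_L$ itself, giving $\ind_{S^1}(M,V;L)=\ind_{S^1}(M;L)$.

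For the excision property~(3) I would use that $D_L^V$ is invertible over the end of $V$, so that the index is localized near $M\setminus V$: given an $S^1$-invariant open neighborhood $M'$ of $M\setminus V$, one forms the analogous operator for the data $(M',M'\cap V,L|_{M'})$ and compares the two indices by an excision/gluing argument (a Mayer--Vietoris argument for Fredholm indices, as in excision in $K$-homology), the contributions over the region where both operators are already invertible cancelling. Property~(4) follows since the Dolbeault--Dirac operator of a product is the graded tensor product $D_{L_1}\otimes 1+1\otimes D_{L_2}$, the vertical models of the factors combine into an admissible model for $M_1\times M_2$ with $V=(V_1\times M_2)\cup(M_1\times V_2)$, and the Fredholm index is multiplicative under graded tensor product. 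The main obstacle is the analytic step: showing that $L$-acyclicity genuinely produces a uniform spectral gap at infinity and that the modified operator is truly Fredholm---this requires some control of the geometry of $M$ near infinity (completeness together with bounds on the model)---and, tied to this, that the space of admissible data is connected so the index is well defined. Establishing the excision property~(3) is the other delicate point, as it is precisely there that the genuinely \emph{local} nature of the invariant has to be pinned down.
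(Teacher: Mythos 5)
Your sketch follows essentially the same route as the construction of Fujita--Furuta--Yoshida that the paper cites and summarizes in Example~2.3: perturb the Spin$^c$ Dirac operator by an orbitwise de Rham-type operator whose invertibility over $V$ (guaranteed by $L$-acyclicity, together with the fact that $H^1$ of a flat line bundle over a circle vanishes whenever $H^0$ does) makes the perturbed operator Fredholm after suitable control at infinity, and then read off properties (1)--(4) from the Fredholm package. Two details differ from the cited construction and deserve flagging: there one does not interpolate between $D_L$ and a model but takes the additive family $D_t=D+t\rho\,D_{S^1}$ on a cylindrical extension of the end and sends $t\to\infty$; and the requirement that $D_{S^1}$ anti-commute with Clifford multiplication by $S^1$-equivariant normal sections --- the hypothesis that makes the Weitzenb\"ock-type estimate for $D_t$ work --- is missing from your account.
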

We call $\ind_{S^1}\left(M,V;L\right)$ the {\it equivariant local index}. 

\begin{example}\label{disc}
For a small positive real number $\varepsilon>0$ which is less than $1$ let $D_\varepsilon \left(\C_{(1)}\right)=\{ z\in \C_{(1)}\mid \abs{z}<\varepsilon\}$ be the $2$-dimensional disc of radius $\varepsilon$. As $(L,\nabla^L)\to (M,\omega)$ we consider 
\[
\left(D_\varepsilon \left(\C_{(1)}\right)\times \C_{(m)},d+\frac{1}{2}(zd\bar z-\bar zdz)\right)\to \left(D_\varepsilon \left(\C_{(1)}\right),\frac{\sqrt{-1}}{2\pi}dz\wedge d\bar z\right).
\]
First let us detect non $L$-acyclic orbits. Suppose the orbit $\CO$ through $z\in D_\varepsilon \left(\C_{(1)}\right)$ has a non-trivial parallel section $s\in H^0\left(\CO;(L,\nabla)|_{\CO}\right)$. Then $s$ satisfies the following equation
\[
0=\nabla^L_{\partial_\theta}s=\dfrac{\partial s}{\partial \theta}-2\pi\sqrt{-1}r^2s,
\]
where we use the polar coordinates $z=re^{2\pi\sqrt{-1}\theta}$. Hence $s$ is of the form $s(\theta)=s_0e^{2\pi\sqrt{-1}r^2\theta}$ for some non-zero constant $s_0$. Since $s$ is a global section on $\CO$ $s$ satisfies $s(0)=s(1)$. This implies $r=0$. 

Next, we put $V=D_\varepsilon \left(\C_{(1)}\right)\setminus \{0\}$ and let us compute $\ind_{S^1}(M,V;L)$. 
We recall the definition of $\ind_{S^1}(N,V;L)$. 
For $t\ge 0$ consider the following perturbation of the Spin${}^c$ Dirac operator $D\colon \Gamma\left(\wedge^{0,*} T^*M\otimes L\right)\to\Gamma\left(\wedge^{0,*} T^*M\otimes L\right) $ associated with the standard Hermitian structure on $M=D_\varepsilon \left(\C_{(1)}\right)$ 
\[
D_t=D+t\rho D_{S^1},
\]
where $\rho$ is a cut-off function of $V$ and $D_{S^1}$ is a first order formally self-adjoint differential operator of degree-one
\[
D_{S^1}\colon \Gamma\left((\wedge^\star T^*M^{0,1}\otimes L)|_V\right)\to \Gamma\left((\wedge^\star T^*M^{0,1}\otimes L)|_V\right)
\]
that satisfies the following conditions:
\begin{enumerate}
\item $D_{S^1}$ contains only derivatives along orbits. 
\item The restriction $D_{S^1}|_{\CO}$ to an orbit $\CO$ is the de Rham operator with coefficients in $L|_{\CO}$. 
\item For any $S^1$-equivariant section $u$ of the normal bundle $\nu_{\CO}$ of $\CO$ in $M$, $D_{S^1}$ anti-commutes with the Clifford multiplication of $u$. 
\end{enumerate}
See \cite{FFY1,FFY2,FFY3} for more details. From the second condition and $\{0\}$ is the unique non $L$-acyclic orbit we can see $\ker \left(D_{S^1}|_{\CO}\right)=0$ for all orbits $\CO\not=\{0\}$. Extend the complement of a neighborhood of $0$ in $D_\varepsilon \left(\C_{(1)}\right)$ cylindrically so that all the data are translationally invariant. Then we showed in \cite{FFY1,FFY2} that for a sufficiently large $t$ $D_t$ is Fredholm, namely, $\ker D_t\cap L^2$ is finite dimensional and its super-dimension is independent of a sufficiently large $t$. So we define 
\[
\ind_{S^1}\left(M,V;L\right)=\ker D_t^0\cap L^2-\ker D_t^1\cap L^2
\]  
for a sufficiently large $t$. In this case, by the direct computation using the Fourier expansion of $s$ with respect to $\theta$, 
we can show that 
\[
\ker D_t^0\cap L^2\cong \C, \ \ \ker D_t^1\cap L^2=0,
\]
and $\ker D_t^0\cap L^2$ is spanned by a certain $L^2$-function $a_0(r)$ on $D_\varepsilon \left(\C_{(1)}\right)$ which depends only on $r=\abs{z}$. Since the $S^1$-action on $\ker D_t^0\cap L^2$ is given by pull-back and the $S^1$-action on the fiber is given by $\C_{(m)}$ we obtain 
\[
\begin{split}
\ind_{S^1}\left(M,V;L\right)&=\ind_{S^1}\left(D_\varepsilon \left(\C_{(1)}\right),D_\varepsilon \left(\C_{(1)}\right)\setminus \{0\};D_\varepsilon \left(\C_{(1)}\right)\times \C_{(m)}\right)\\
&=\C_{(-m)}. 
\end{split}
\]
For more details see \cite[Remark~6.10]{FFY1}, or \cite[Section~5.3.2]{Y7}.  
\end{example}

It is well-known that the lift of $S^1$-action on $M$ to $L$ defines the moment map $\mu\colon M\to \R$ by the Kostant formula
\begin{equation}\label{Kostant formula}
\CL_Xs=\nabla^L_Xs+2\pi\sqrt{-1}\mu s,
\end{equation}
where $s$ is a section of $L$, $X$ is the vector field which generates the $S^1$-action on $(M,\omega)$, and $\CL_X s$ is the Lie derivative which is defined by
\[
\CL_Xs(x)=\dfrac{d}{d\theta}\Big|_{\theta=0}e^{-2\pi\sqrt{-1}\theta}s(e^{2\pi\sqrt{-1}\theta}x). 
\]
\begin{lemma}
If an orbit $\CO$ is not $L$-acyclic, namely, $H^0\left(\CO;(L,\nabla^L)|_{\CO}\right)\not=0$, then, $\mu(\CO)\in \Z$. 
\end{lemma}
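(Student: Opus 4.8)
The plan is to distill from a non‑trivial parallel section along $\CO$ an explicit quasi‑periodicity identity, and then to test it against the stabiliser of a point of $\CO$. Since $\mu$ is the moment map of the $S^1$‑action it is $S^1$‑invariant, hence constant on $\CO$; write $c:=\mu(\CO)$. Assuming $H^0\!\left(\CO;(L,\nabla^L)|_\CO\right)\neq0$, pick a non‑zero $s$ in it, so that $\nabla^L_Xs=0$ along $\CO$, where $X$ is the generating vector field. Fix $x\in\CO$ and set $\sigma(\theta):=e^{-2\pi\sqrt{-1}\theta}\,s\!\left(e^{2\pi\sqrt{-1}\theta}x\right)\in L_x$, the group element acting on the fibre through the $S^1$‑equivariant structure on $L$, so that $\sigma(0)=s(x)$.

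The first step is to integrate an ODE for $\sigma$. Differentiating and using the group law one finds $\sigma'(\theta)=e^{-2\pi\sqrt{-1}\theta}\,(\CL_Xs)\!\left(e^{2\pi\sqrt{-1}\theta}x\right)$; along $\CO$ the Kostant formula~\eqref{Kostant formula}, together with $\nabla^L_Xs=0$ and $\mu\equiv c$, gives $\CL_Xs=2\pi\sqrt{-1}c\,s$, whence $\sigma'=2\pi\sqrt{-1}c\,\sigma$ and so $\sigma(\theta)=e^{2\pi\sqrt{-1}c\theta}s(x)$. Rewriting, this says $s\!\left(e^{2\pi\sqrt{-1}\theta}x\right)=e^{2\pi\sqrt{-1}c\theta}\bigl(e^{2\pi\sqrt{-1}\theta}\cdot s(x)\bigr)$ for every $\theta\in\R$, where $\cdot$ denotes the fibrewise $S^1$‑action.

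The second step evaluates this identity for $\theta_0$ in the stabiliser $\Gamma$ of $x$. If $e^{2\pi\sqrt{-1}\theta_0}x=x$, then $e^{2\pi\sqrt{-1}\theta_0}$ acts on the line $L_x$ by a scalar $\lambda(\theta_0)$, and since $s(x)\neq0$ the identity forces $\lambda(\theta_0)=e^{-2\pi\sqrt{-1}c\theta_0}$. Now $\Gamma$ is a closed subgroup of $S^1$. If $\Gamma=S^1$, then $\lambda$ is a character of $S^1$, so $\lambda(\theta_0)=e^{2\pi\sqrt{-1}w\theta_0}$ for some $w\in\Z$ and hence $c=-w\in\Z$; this merely recovers the familiar fact that $\mu$ of a fixed point is minus the weight of the $S^1$‑action on the fibre of $L$ there. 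If $\Gamma=\Z_k$ is finite, take $\theta_0=1/k$ and note $\lambda(1/k)^k=\lambda(1)=1$, so $\lambda(1/k)=e^{2\pi\sqrt{-1}m/k}$ for some $m\in\Z$; comparing the two expressions for $\lambda(1/k)$ gives $c+m\in k\Z$, in particular $c\in\Z$. Either way, $\mu(\CO)\in\Z$.

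The only point requiring care is the bookkeeping for non‑free orbits: $e^{2\pi\sqrt{-1}\theta}$ sends $L_x$ to $L_{e^{2\pi\sqrt{-1}\theta}x}$ and is an endomorphism of $L_x$ only when $e^{2\pi\sqrt{-1}\theta}$ fixes $x$, so the scalar $\lambda(\theta_0)$ is introduced precisely on $\Gamma$; apart from this, the $S^1$‑equivariance of $\nabla^L$ enters only through~\eqref{Kostant formula}, and the computation of $\sigma'$ is a direct application of the definition of $\CL_X$ recalled above. I expect no further difficulty.
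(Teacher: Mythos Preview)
Your argument is correct. The paper states this lemma without proof; the nearest thing is the explicit computation in Example~\ref{disc}, where the equation $\nabla^L_{\partial_\theta}s=0$ is solved directly along an orbit and the condition $s(0)=s(1)$ forces integrality of the moment map value. Your route through the Kostant formula~\eqref{Kostant formula} is a clean repackaging of the same idea and works on a general orbit.

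One simplification: the case analysis on the stabiliser $\Gamma$ is unnecessary. The identity element $e^{2\pi\sqrt{-1}\cdot 1}=1\in S^1$ always fixes $x$ and acts trivially on $L_x$, so you may take $\theta_0=1$ in every case; then $\sigma(1)=s(x)$ on the one hand and $\sigma(1)=e^{2\pi\sqrt{-1}c}s(x)$ on the other, giving $c\in\Z$ immediately. Your finer analysis does recover the additional fact that at a fixed point $\mu$ equals minus the weight of the fibre representation, but that is not needed for the lemma. A minor omission: you use $s(x)\neq 0$ without comment; it holds because a parallel section of a line bundle over a connected base is either identically zero or nowhere zero.
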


If $M$ is closed, then we have the following localization formula for the equivariant Riemann-Roch index. 
\begin{corollary}
Suppose $M$ is closed. For $i\in \mu(M)\cap \Z$ let $V_i$ be an $S^1$-invariant open neighborhood of $\mu^{-1}(i)$ such that they are mutually disjoint, namely, $V_i\cap V_j\not=\emptyset$ for all $i\not= j$. Then, 
\begin{equation}\label{localization}
\ind_{S^1}\left(M;L\right)=\bigoplus_{i\in \mu(M)\cap \Z}\ind_{S^1}\left(V_i,V_i\cap V;L|_{V_i}\right) .
\end{equation}
\end{corollary}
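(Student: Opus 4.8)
The plan is to deduce \eqref{localization} from the structural properties of the equivariant local index established in the theorem above --- namely the identification (2) with the Riemann-Roch index when $M$ is closed, the excision property (3), and additivity under disjoint unions --- together with the preceding lemma.

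First I would produce a suitable auxiliary open set. Since $M$ is closed, $\mu(M)$ is compact, so $\mu(M)\cap\Z$ is finite. Choose $\delta\in(0,1/2)$ small enough that $\mu^{-1}\left([i-\delta,i+\delta]\right)\subset V_i$ for every $i\in\mu(M)\cap\Z$; this is possible by a routine compactness argument, because $\mu^{-1}(i)$ is compact and $V_i$ is an open neighborhood of it. Put $K_i=\mu^{-1}\left([i-\delta,i+\delta]\right)$ and $V=M\setminus\bigcup_i K_i$. Then $V$ is $S^1$-invariant, its complement $\bigcup_i K_i$ is compact (a finite union of compacta, pairwise disjoint since $\delta<1/2$), and every orbit contained in $V$ maps under $\mu$ to a non-integer, hence is $L$-acyclic by the contrapositive of the lemma. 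Thus $(M,V;L)$ meets the requirements of the theorem, and since $M$ is closed, property (2) gives $\ind_{S^1}(M;L)=\ind_{S^1}(M,V;L)$.

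Next I would apply the excision property (3) with $M'=\bigcup_i V_i$, which is an $S^1$-invariant open neighborhood of $M\setminus V=\bigsqcup_i K_i$ because $K_i\subset V_i$; this yields $\ind_{S^1}(M,V;L)=\ind_{S^1}(M',M'\cap V;L|_{M'})$. As the $V_i$ are pairwise disjoint, $M'$ is their disjoint union, $M'\cap V=\bigsqcup_i(V_i\cap V)$, and $L|_{M'}$ restricts to $L|_{V_i}$ on the $i$-th summand; moreover $V_i\setminus(V_i\cap V)=V_i\cap K_i=K_i$ is compact and $V_i\cap V\subset V$ contains only $L$-acyclic orbits, so each triple $(V_i,V_i\cap V;L|_{V_i})$ again satisfies the hypotheses of the theorem. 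Finally, the equivariant local index is additive under finite disjoint unions --- the perturbed Dirac operator on a disjoint union is the direct sum of the perturbed operators on the components, so its $L^2$-kernel, and hence its class in $R(S^1)$, splits as a direct sum --- which gives $\ind_{S^1}(M',M'\cap V;L|_{M'})=\bigoplus_i\ind_{S^1}(V_i,V_i\cap V;L|_{V_i})$. Chaining the three equalities proves \eqref{localization}.

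I do not anticipate a genuine obstacle here. The substantive input is the lemma, which confines the non-$L$-acyclic orbits to the slabs $\mu^{-1}\left([i-\delta,i+\delta]\right)$ and so allows them to be isolated inside the prescribed neighborhoods $V_i$; the rest is bookkeeping with excision and disjoint unions. The only points requiring a little care are the uniform choice of $\delta$ (so that every slab lands inside the corresponding $V_i$) and the observation that additivity under finite disjoint unions, though not listed among the four properties, is immediate from the construction of the local index.
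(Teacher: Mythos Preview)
The paper does not actually give a proof of this corollary; it is stated immediately after the lemma and the theorem as a direct consequence, with no argument supplied. Your proposal fills in precisely the details the paper leaves implicit, and does so correctly: you use the lemma to confine the non-$L$-acyclic orbits to integer levels, invoke property~(2) to pass from the Riemann-Roch index to a local index, then apply excision~(3) and additivity under disjoint unions. This is the intended argument.

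One incidental remark: the symbol $V$ in the displayed formula of the corollary is never defined in the paper's statement, and your construction of $V=M\setminus\bigcup_i K_i$ supplies the missing definition in the only sensible way.
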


\section{Symplectic cut}\label{symplectic cut}
In this section let us briefly recall the symplectic cut and its properties we use in this paper. See \cite{Lerman} for more details. Let $(M,\omega)$ be a Hamiltonian $S^1$-space with moment map $\mu\colon M\to \R$. For a real number $n$ the cut space $\overline{M}_{\mu\le n}$ of $(M,\omega)$ by the symplectic cutting~\cite{Lerman} is the reduced space of the diagonal $S^1$-action on $(M,\omega)\times \left(\C_{(1)},\frac{\sqrt{-1}}{2\pi}dz\wedge d\bar z\right)$, namely, 
\[
\overline{M}_{\mu\le n}=\left\{ \left. (x,z)\in (M,\omega)\times \left(\C_{(1)},\frac{\sqrt{-1}}{2\pi}dz\wedge d\bar z\right)\right| \mu(x)+\abs{z}^2=n\right\}/S^1.
\]
We denote the reduced space $\mu^{-1}(n)/S^1$ by $M_n$. 
\begin{prop}\label{prequantum condition on M}
\textup{(1)} If $S^1$ acts on $\mu^{-1}(n)$ freely, then, $\overline{M}_{\mu\le n}$ is a smooth Hamiltonian $S^1$-space. The $S^1$-action is given as 
\begin{equation}\label{actionMbar}
t[x,z]=[tx,z]
\end{equation}
for $t\in S^1$ and $[x,z]\in \overline{M}_{\mu\le n}$.\\
\textup{(2)} Under the assumption in $(1)$, the reduced space $M_n$ and $\{x\in M\mid \mu(x)< n\}$ are symplectically embedded into $\overline{M}_{\mu\le n}$ by $M_n\ni [x]\mapsto [x,0]\in \overline{M}_{\mu\le n}$ and $\{x\in M\mid \mu(x)< n\}\ni x\mapsto \left[x,\sqrt{n-\mu(x)}\right]\in \overline{M}_{\mu\le n}$, respectively. In particular, $\overline{M}_{\mu\le n}$ can be identified with the disjoint union $\{x\in M\mid \mu(x)< n\}\coprod M_n$ and with this identification $M_n$ is fixed by the $S^1$-action~\eqref{actionMbar}.
\end{prop}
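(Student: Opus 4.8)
\emph{Approach.} The proposition is essentially Lerman's description of the symplectic cut, so the plan is to recognise $\overline{M}_{\mu\le n}$ as a Marsden--Weinstein quotient, verify the freeness and regularity hypotheses, and then read off the residual structure. Throughout I would write $\Phi\colon M\times\C_{(1)}\to\R$, $\Phi(x,z)=\mu(x)+\abs{z}^2$, for the moment map of the diagonal $S^1$-action, so that $\overline{M}_{\mu\le n}=\Phi^{-1}(n)/S^1$, and let $\iota\colon\Phi^{-1}(n)\hookrightarrow M\times\C_{(1)}$ and $\pi\colon\Phi^{-1}(n)\to\overline{M}_{\mu\le n}$ be the inclusion and the quotient map.

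\emph{Part (1).} First I would check that $n$ is a regular value of $\Phi$: since $d(\abs{z}^2)$ vanishes only at $z=0$, a critical point of $\Phi$ on $\Phi^{-1}(n)$ must have $z=0$ and $d\mu_x=0$; but $z=0$ with $\Phi(x,0)=n$ forces $x\in\mu^{-1}(n)$, on which $S^1$ acts freely and hence without fixed points, so $d\mu_x\neq0$. Next I would check that the diagonal action is free on $\Phi^{-1}(n)$: the stabiliser of $(x,z)$ is the intersection of the stabilisers of $x$ and of $z$; if $z\neq0$ the latter is trivial, and if $z=0$ then $x\in\mu^{-1}(n)$, where the former is trivial by hypothesis. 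With these two facts the Marsden--Weinstein theorem makes $\overline{M}_{\mu\le n}$ a smooth symplectic manifold carrying the unique form $\omega_{\overline M}$ with $\pi^*\omega_{\overline M}=\iota^*\bigl(\omega\oplus\tfrac{\sqrt{-1}}{2\pi}dz\wedge d\bar z\bigr)$. Finally, the action $t\cdot(x,z)=(tx,z)$ on $M\times\C_{(1)}$ commutes with the diagonal action, preserves $\Phi^{-1}(n)$ because $\mu$ is $S^1$-invariant, and is Hamiltonian with moment map $(x,z)\mapsto\mu(x)$; as this moment map is diagonal-invariant, the action and the moment map descend to $\overline{M}_{\mu\le n}$, yielding the action \eqref{actionMbar} together with its Hamiltonian structure.

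\emph{Part (2).} For $M_n$ I would argue as follows. At a point $(x,0)\in\Phi^{-1}(n)$ one computes $T_{(x,0)}\Phi^{-1}(n)=T_x\mu^{-1}(n)\oplus\C$, so $\mu^{-1}(n)\times\{0\}$ is a codimension-$2$ submanifold of $\Phi^{-1}(n)$; it is diagonal-invariant and the action on it is free, hence its quotient $\mu^{-1}(n)/S^1=M_n$ embeds into $\overline{M}_{\mu\le n}$, and the embedding is $[x]\mapsto[x,0]$. Restricting $\pi^*\omega_{\overline M}=\iota^*(\omega\oplus\cdots)$ to $\mu^{-1}(n)\times\{0\}$ gives $\omega|_{\mu^{-1}(n)}$, which is exactly what defines the reduced form on $M_n$, so the embedding is symplectic; and $M_n$ is pointwise fixed by \eqref{actionMbar} since $[tx,0]=[x,0]$ for all $t$. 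For $\{x\in M\mid\mu(x)\le n\}$: on the open subset $\{\mu<n\}$ the assignment $x\mapsto\bigl(x,\sqrt{n-\mu(x)}\bigr)$ is a smooth section of $\Phi$ (the square root is smooth because $n-\mu(x)>0$), and composing it with $\pi$ gives a map onto the open set $\{[x,z]\mid z\neq0\}\subseteq\overline{M}_{\mu\le n}$; it is bijective there with inverse $[x,z]\mapsto\bar z\, x/\abs{z}$ (using $z\neq0$ to select the unique diagonal representative with positive real second coordinate), hence a diffeomorphism, and it is symplectic because the section is real in the $\C_{(1)}$-factor and therefore pulls $\tfrac{\sqrt{-1}}{2\pi}dz\wedge d\bar z$ back to $0$, leaving $\omega$. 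Since this open set is precisely the complement of $M_n$, extending the map by $x\mapsto[x,0]$ on $\mu^{-1}(n)$ exhausts $\overline{M}_{\mu\le n}$, which is thus identified with $\{\mu<n\}$ capped off by $M_n$, i.e. with the stated disjoint union after the $S^1$-orbits in $\mu^{-1}(n)$ are collapsed onto $M_n$.

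\emph{Expected difficulty.} None of the steps is deep; the points that need genuine care are deducing regularity of $n$ for $\Phi$ and freeness of the diagonal action on $\Phi^{-1}(n)$ purely from freeness on $\mu^{-1}(n)$, and verifying that the section $x\mapsto\bigl(x,\sqrt{n-\mu(x)}\bigr)$ really induces a \emph{symplectic} open embedding whose image is exactly $\overline{M}_{\mu\le n}\setminus M_n$, so that the two embeddings together account for all of $\overline{M}_{\mu\le n}$.
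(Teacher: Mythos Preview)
The paper does not supply a proof of this proposition: it is stated as a recollection of Lerman's symplectic cutting construction, with the reference~\cite{Lerman} given at the beginning of Section~\ref{symplectic cut}, and no \texttt{proof} environment follows the statement. Your proposal is therefore not in competition with anything in the paper; it is the standard Marsden--Weinstein argument that underlies Lerman's construction, and the steps you outline (regularity of $n$ for $\Phi$ and freeness of the diagonal action deduced from freeness on $\mu^{-1}(n)$, descent of the residual $S^1$-action and its moment map, and the explicit section $x\mapsto\bigl(x,\sqrt{n-\mu(x)}\bigr)$ identifying $\{\mu<n\}$ with the complement of $M_n$) are all correct.

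One minor point worth flagging: the proposition as stated speaks of $\{\mu\le n\}$ being ``symplectically embedded,'' but the map $x\mapsto[x,\sqrt{n-\mu(x)}]$ is only an embedding on the open part $\{\mu<n\}$; on the boundary $\mu^{-1}(n)$ it collapses each $S^1$-orbit to a point of $M_n$. You handle this correctly in your last paragraph by treating the open embedding of $\{\mu<n\}$ separately and then observing that the remaining points of $\overline{M}_{\mu\le n}$ are exactly $M_n$, which matches the intended (set-theoretic) ``disjoint union'' description in the proposition.
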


Suppose that $(M,\omega)$ is equipped with a prequantum line bundle $(L,\nabla^L)\to (M,\omega)$ and the $S^1$-action lifts to $(L,\nabla^L)$ in such a way that $\mu$ satisfies the Kostant formula~\eqref{Kostant formula}. 
\begin{prop}\label{prequantum condition on L}
If $n$ is an integer and the $S^1$-action on $\mu^{-1}(n)$ is free, then $\overline{M}_{\mu\le n}$ is prequantizable. In this case a prequantum line bundle $(\overline{L},\nabla^{\overline{L}})$ on $\overline{M}_{\mu\le n}$ is given by 
\[
(\overline{L},\nabla^{\overline{L}})=\left((L,\nabla^L)\otimes \C_{(n)}\right)\boxtimes \left(\C_{(1)}\times \C_{(0)}, d+\frac{1}{2}(zd\bar z-\bar zdz)\right)\Big|_{\Phi^{-1}(0)}/S^1, 
\]
where $\Phi$ is the moment map $\Phi\colon M\times \C_{(1)}\to \R$ associated to the lift of the diagonal $S^1$-action which is written as $\Phi(x,z)=\mu(x)+\abs{z}^2-n$, and the lift of the $S^1$-action~\eqref{actionMbar} on $\overline{M}_{\mu\le n}$ to $(\overline{L},\nabla^{\overline{L}})$ is given by 
\begin{equation}\label{actionLbar}
t[u\otimes v\boxtimes (z,w)]=[(tu)\otimes v\boxtimes (z,w)]
\end{equation}
for $t\in S^1$ and $[u\otimes v\boxtimes (z,w)]\in \overline{L}$. The moment map $\overline{\mu}\colon \overline{M}_{\mu\le n}\to \R$ associated with the lift~\eqref{actionLbar} is written as $\overline{\mu}([x,z])=\mu(x)=n-\abs{z}^2$. 
\end{prop}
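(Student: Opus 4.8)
The plan is to exhibit $(\overline L,\nabla^{\overline L})$ as the symplectic reduction at level $0$ of an explicit $S^1$-equivariant prequantum line bundle on the product $(M,\omega)\times\left(\C_{(1)},\frac{\sqrt{-1}}{2\pi}dz\wedge d\bar z\right)$ --- this is the prequantization-commutes-with-reduction construction of Kostant and Guillemin--Sternberg --- and then to read off the residual $S^1$-action and its moment map directly from the construction. Proposition~\ref{prequantum condition on M} already supplies the smoothness of $\overline M_{\mu\le n}$, so the new content is the line bundle, its connection, the lifted action~\eqref{actionLbar}, and the formula for $\overline\mu$.

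First I would assemble the input data. The hypothesis $n\in\Z$ is used precisely to make the character $\C_{(n)}$ an honest $S^1$-representation, so that $(L,\nabla^L)\otimes\C_{(n)}$ is again an $S^1$-equivariant prequantum line bundle on $(M,\omega)$; a one-line application of Kostant's formula~\eqref{Kostant formula} shows its moment map is $\mu-n$. On the other factor, the trivial bundle $\C_{(1)}\times\C_{(0)}$ with connection $d+\frac{1}{2}(zd\bar z-\bar z dz)$ has curvature $dz\wedge d\bar z=-2\pi\sqrt{-1}\cdot\frac{\sqrt{-1}}{2\pi}dz\wedge d\bar z$, hence is a prequantum line bundle for $\left(\C_{(1)},\frac{\sqrt{-1}}{2\pi}dz\wedge d\bar z\right)$, and for the weight-one rotation Kostant's formula gives its moment map as $\abs{z}^2$. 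Since curvature and moment maps add under exterior tensor products, the bundle $\left((L,\nabla^L)\otimes\C_{(n)}\right)\boxtimes\left(\C_{(1)}\times\C_{(0)},d+\frac{1}{2}(zd\bar z-\bar z dz)\right)$ is a prequantum line bundle for the product symplectic form, carrying a lift of the diagonal $S^1$-action whose moment map is $\Phi(x,z)=\mu(x)+\abs{z}^2-n$.

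Next I would run the reduction step. Because $n$ is a regular value of $\mu$ and $S^1$ acts freely on $\mu^{-1}(n)$, the value $0$ is a regular value of $\Phi$ and the diagonal action is free on $\Phi^{-1}(0)$ (if $z=0$ then $x\in\mu^{-1}(n)$, and off the zero section freeness comes from the $\C_{(1)}$-factor), so $\overline M_{\mu\le n}=\Phi^{-1}(0)/S^1$ is the smooth manifold of Proposition~\ref{prequantum condition on M}(1). Since $\Phi$ vanishes identically on $\Phi^{-1}(0)$, Kostant's formula shows that the lift of the diagonal-action generator is horizontal along $\Phi^{-1}(0)$; hence the restricted prequantum line bundle is basic and descends to an $S^1$-invariant prequantum line bundle on $\overline M_{\mu\le n}$, which by construction is the $(\overline L,\nabla^{\overline L})$ of the statement.

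Finally I would identify the residual data. The product carries a second $S^1$-action, commuting with the diagonal one, that moves only the $M$-factor and acts trivially on $\C_{(1)}$ and on the $\C_{(n)}$- and $\C_{(0)}$-factors of the line bundle; it preserves $\Phi^{-1}(0)$ and descends to $\overline M_{\mu\le n}$ as~\eqref{actionMbar} and to $\overline L$ as~\eqref{actionLbar}. Only the $L$-factor contributes to its moment map on the product bundle, so that moment map is $\mu(x)$; restricting to $\Phi^{-1}(0)$ and passing to the quotient gives $\overline\mu([x,z])=\mu(x)=n-\abs{z}^2$. The fiddly part --- the main thing to get right, rather than a genuine obstacle --- is the sign bookkeeping and, in particular, the splitting of the constant $-n$ between the diagonal and the residual actions: the residual lift must absorb none of it, which is exactly what~\eqref{actionLbar} encodes, and once the two elementary moment maps $\mu-n$ on $L\otimes\C_{(n)}$ and $\abs{z}^2$ on $\C_{(1)}$ are normalized consistently the remaining verifications are routine.
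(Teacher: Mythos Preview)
Your argument is correct and is precisely the standard prequantization-commutes-with-reduction computation that underlies this proposition. Note, however, that the paper does not actually prove Proposition~\ref{prequantum condition on L}: it is stated without proof in Section~\ref{symplectic cut} as part of the recollection of Lerman's symplectic cut construction, so there is no ``paper's own proof'' to compare against. What you have written is the argument the paper is implicitly invoking, and your bookkeeping on the moment maps (in particular the twist by $\C_{(n)}$ shifting $\mu$ to $\mu-n$, and the residual action picking up only the $L$-factor contribution) is exactly what is needed to justify the explicit formulas for $(\overline L,\nabla^{\overline L})$, the lift~\eqref{actionLbar}, and $\overline\mu$.
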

\begin{remark}\label{actionLn}
We denote the restriction of $(\overline{L},\nabla^{\overline{L}})$ to $M_n$ by $(L_n,\nabla^{L_n})$. $(L_n,\nabla^{L_n})$ is a prequantum line bundle on $M_n$. The $S^1$-action~\eqref{actionLbar} on $L_n$ is given by the fiberwise multiplication with weight $n$. Recall that $M_n$ is fixed by the $S^1$-action~\eqref{actionMbar}. See Proposition~\ref{prequantum condition on M}.
\end{remark}

\section{Main theorem}\label{main theorem}
Let $(M,\omega)$ be a prequantizable Hamiltonian $S^1$-manifold and $(L,\nabla^L)$ an $S^1$-equivariant prequantum line bundle on $(M,\omega)$ with the associated moment map $\mu\colon M\to \R$. Let $n$ be an integer and we assume the $S^1$-action on $\mu^{-1}(n)$ is free. Then, the cut space $\overline{M}_{\mu\le n}$ becomes a prequantizable Hamiltonian $S^1$-manifold and the $S^1$-equivariant prequantum line bundle $(\overline{L},\nabla^{\overline{L}})$ is given by Proposition~\ref{prequantum condition on M} and Proposition~\ref{prequantum condition on L}. 

Suppose that $\mu^{-1}(n)$ is compact. We take a sufficiently small $S^1$-invariant open neighborhood $O$ of $M_n$ in $\overline{M}_{\mu\le n}$ so that the intersection $\overline{\mu}(O)\cap \Z$ consists of the unique point $n$. Then we can define the equivariant local index $\ind_{S^1}\left(O,O\setminus M_n;\overline{L}|_O\right)$ of $M_n$ in $\overline{M}_{\mu\le n}$. We give the following formula for $\ind_{S^1}\left(O,O\setminus M_n;\overline{L}|_O\right)$. 
\begin{theorem}\label{Main}
Let $(M,\omega)$, $(L,\nabla^L)$, and $\mu$ be as above. Let $n$ be an integer. Suppose $S^1$ acts on $\mu^{-1}(n)$ freely and $\mu^{-1}(n)$ is compact. Let $O$ be a sufficiently small $S^1$-invariant open neighborhood of $M_n$ in $\overline{M}_{\mu\le n}$ which satisfies $\overline{\mu}(O)\cap \Z=\{ n\}$. Then, the equivariant local index is given as
\[
\ind_{S^1}\left(O,O\setminus M_n;\overline{L}|_O\right)=\ind(M_n;L_n)\C_{(n)},
\]
where 
$\ind(M_n;L_n)$ is the Riemann-Roch number of $M_n$. 
\end{theorem}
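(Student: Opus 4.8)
The plan is to localize the problem to a tubular neighborhood of $M_n$ by excision, to identify that neighborhood with the disc bundle of a line bundle over $M_n$, and then to split the computation into a fibrewise part governed by Example~\ref{disc} and a base part which is the ordinary Riemann--Roch number of the closed manifold $M_n$.

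First I would use the excision property (property~(3)) to replace $O$ by an arbitrarily small $S^1$-invariant tubular neighborhood of $M_n$ in $\overline{M}_{\mu\le n}$. Combining the equivariant symplectic normal form along the free regular level $\mu^{-1}(n)$ with the description of the symplectic cut in Proposition~\ref{prequantum condition on M}, this neighborhood is $S^1$-equivariantly identified with the $\varepsilon$-disc bundle $D_\varepsilon(N)$ of the Hermitian line bundle $N=\mu^{-1}(n)\times_{S^1}\C_{(1)}\to M_n$, where $S^1$ acts trivially on the base $M_n$ and fibrewise (by rotation) on $N$, and $M_n$ is the zero section. Under this identification, tracking the data in Proposition~\ref{prequantum condition on L} and the residual action~\eqref{actionLbar}, the restriction of $\overline{L}$ to each fibre $D_\varepsilon(\C_{(1)})$ of $D_\varepsilon(N)$ is exactly a bundle of the form $D_\varepsilon(\C_{(1)})\times\C_{(m)}$ with the connection of Example~\ref{disc}, while over the zero section the $S^1$-action on $\overline{L}|_{M_n}=L_n$ is the weight-$n$ action of Remark~\ref{actionLn}; this pins down $m$ in terms of $n$. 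By deformation invariance (property~(1)) I may take the metric, connection and the degree-one operator $D_{S^1}$ to be of local product type over the fibres, and one checks that the index $\ind_{S^1}(D_\varepsilon(N),D_\varepsilon(N)\setminus M_n;\overline{L})$ is defined: the orbits in $D_\varepsilon(N)\setminus M_n$ have $\overline{\mu}$-value in $(n-\varepsilon,n)$, which meets $\Z$ only if it is empty, hence they are $\overline{L}$-acyclic by the Lemma.

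Next I would compute this index by a fibre-bundle form of the product formula (property~(4)). Over each fibre the data is precisely that of Example~\ref{disc}, whose equivariant local index is the one-dimensional representation $\C_{(-m)}=\C_{(n)}$ after the weight is matched with Remark~\ref{actionLn}. Since $M_n$ is closed and is acted on trivially, the contribution of the base is, by property~(2), the equivariant Riemann--Roch index $\ind_{S^1}(M_n;L_n)$, which is the ordinary Riemann--Roch number $\ind(M_n;L_n)$ with trivial $S^1$-action. Assembling the two contributions gives $\ind(M_n;L_n)\,\C_{(n)}$, and by excision this equals $\ind_{S^1}\left(O,O\setminus M_n;\overline{L}|_O\right)$, as claimed.

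The main obstacle is the passage from a genuine product, where property~(4) applies directly, to the twisted disc bundle $D_\varepsilon(N)\to M_n$, since $N$ and hence $\overline{L}|_{D_\varepsilon(N)}$ are in general nontrivial. I would deal with this either by establishing the required fibre-bundle form of the product formula --- covering $M_n$ by finitely many trivializing opens, applying the product formula together with Example~\ref{disc} over each, and patching the $L^2$-kernels by a Mayer--Vietoris/excision argument --- or by a deformation flattening the bundle over the $1$-skeleton and reading off the index as a sum of local contributions. A second, more bookkeeping-type difficulty is to make sure that the weight coming out of Example~\ref{disc} is $\C_{(n)}$ and not $\C_{(-n)}$: this requires care with the direction of the residual action~\eqref{actionLbar}, with the orientation and complex-structure conventions entering the definition of the index, and with the normalisation of the $S^1$-action on $L_n$ in Remark~\ref{actionLn}.
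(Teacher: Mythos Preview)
Your strategy coincides with the paper's: excise to a tubular neighborhood of $M_n$, identify it with the disc bundle $D_\varepsilon(\nu)$ of $\nu=\mu^{-1}(n)\times_{S^1}\C_{(1)}$ (this is exactly Lemma~\ref{local model}), and then apply the product formula together with Example~\ref{disc} on the fibre. The fibre-bundle version of the product formula that you flag as the main obstacle is already available: the paper simply invokes \cite[Theorem~8.8]{FFY2} and \cite[Section~6.2]{FFY3}, so no Mayer--Vietoris patching or flattening is required.

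There is, however, a genuine inconsistency in your weight bookkeeping. You identify the restriction of $\overline{L}$ to a single fibre with $D_\varepsilon(\C_{(1)})\times\C_{(m)}$ and use Remark~\ref{actionLn} to pin down $m$, obtaining a fibre contribution $\C_{(n)}$; but you then also take the base contribution to be $\ind_{S^1}(M_n;L_n)$ \emph{with trivial $S^1$-action}. These two statements are incompatible: Remark~\ref{actionLn} says precisely that the $S^1$-action on $L_n$ has weight $n$, so $\ind_{S^1}(M_n;L_n)=\ind(M_n;L_n)\,\C_{(n)}$, and combined with your fibre factor you would get $\C_{(2n)}$. The point is that ``restriction of $\overline{L}$ to a fibre'' already absorbs the $L_n$-factor at that point, so it must not be counted again on the base. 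The paper avoids this trap by splitting
\[
\overline{L}|_{D_\varepsilon(\nu)}\;\cong\;p^*L_n\ \otimes\ \bigl(\mu^{-1}(n)\times_{S^1}(D_\varepsilon(\C_{(1)})\times\C_{(0)})\bigr),
\]
so that the \emph{fibrewise} factor fed into Example~\ref{disc} has weight $0$ and contributes $\C_{(0)}$, while the entire weight $n$ arises from the $S^1$-action on $L_n$ in the base term $\ind_{S^1}(M_n;L_n)=\ind(M_n;L_n)\,\C_{(n)}$. This decomposition also dissolves your $\C_{(n)}$-versus-$\C_{(-n)}$ worry, since no nontrivial weight ever passes through Example~\ref{disc}.
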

\begin{remark}
By replacing $\C_{(1)}$ with $\C_{(-1)}$ in the above construction we obtain the other cut space $\overline{M}_{\mu\ge n}=\{ (x,z)\in M\times \C_{-1}\colon \mu(x)-\abs{z}^2=n \}/S^1$. Theorem~\ref{Main} also holds for $\overline{M}_{\mu\ge n}$. 
\end{remark}

To prove the theorem we need some preliminaries. Let $\grad (\mu)$ be the negative gradient vector field of $\mu$ with respect to an $S^1$-invariant Riemannian metric $g$ of $M$, namely, the vector field determined by 
\[
-d\mu=g(\grad (\mu),\cdot\ ). 
\]
Since $n$ is a  regular value of $\mu$, $\grad (\mu)$ does not vanish near $\mu^{-1}(n)$. Let $\phi_\tau$ be the flow of the vector field $\frac{1}{\norm{\grad(\mu)}^2}\grad(\mu)$. Note that $\phi_\tau$ exists on $\mu^{-1}\left((n-\varepsilon,n+\varepsilon)\right)$ for a sufficiently small $\varepsilon>0$ since $\mu^{-1}(n)$ is compact by assumption. 
\begin{lemma}
$\phi_\tau$ commutes with the $S^1$-action on $M$ and satisfies the following property
\[
\mu\left(\phi_\tau(x)\right)=\mu(x)-\tau
\]
for $x\in \mu^{-1}\left((n-\varepsilon,n+\varepsilon)\right)$. 
\end{lemma}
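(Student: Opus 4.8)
The plan is to verify both assertions by a direct computation from the defining relation $-d\mu=g(\grad(\mu),\cdot\,)$ together with the fact that $\phi_\tau$ is, by construction, the flow of $\frac{1}{\norm{\grad(\mu)}^2}\grad(\mu)$.

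First I would record that the generating vector field is $S^1$-invariant. The metric $g$ was chosen $S^1$-invariant and $\mu$ is $S^1$-invariant, so the one-form $d\mu$, and hence the field $\grad(\mu)$ characterized by $-d\mu=g(\grad(\mu),\cdot\,)$, are $S^1$-invariant; dividing by the ($S^1$-invariant and, near $\mu^{-1}(n)$, nowhere vanishing) function $\norm{\grad(\mu)}^2$ preserves invariance. Since the flow of an $S^1$-invariant vector field intertwines the $S^1$-action with itself, this gives $\phi_\tau(tx)=t\,\phi_\tau(x)$ for all $t\in S^1$ and all $x$ in the domain, which is the first claim. The domain question is already settled in the paragraph preceding the lemma: compactness of $\mu^{-1}(n)$ ensures that $\phi_\tau$ is defined on $\mu^{-1}((n-\varepsilon,n+\varepsilon))$ for small $\varepsilon>0$.

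For the displayed identity I would fix $x$ and differentiate $f(\tau):=\mu(\phi_\tau(x))$. By the chain rule, $f'(\tau)=d\mu\bigl(\tfrac{1}{\norm{\grad(\mu)}^2}\grad(\mu)\bigr)\big|_{\phi_\tau(x)}$, and the defining relation gives $d\mu(\grad(\mu))=-g(\grad(\mu),\grad(\mu))=-\norm{\grad(\mu)}^2$, so $f'(\tau)\equiv-1$. Integrating from $0$ to $\tau$ yields $\mu(\phi_\tau(x))=\mu(x)-\tau$. The only point to watch is that $\phi_\tau(x)$ remains in $\mu^{-1}((n-\varepsilon,n+\varepsilon))$ over the range of $\tau$ used; but this is automatic and in fact equivalent to the formula being proved, since the $\mu$-value decreases at unit speed and the trajectory therefore stays in the slab exactly as long as $\mu(x)-\tau\in(n-\varepsilon,n+\varepsilon)$.

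I do not anticipate any real obstacle here: the argument is a one-line differentiation combined with the standard fact about flows of invariant vector fields, the only mild care being the bookkeeping about the flow's domain, which the compactness hypothesis on $\mu^{-1}(n)$ handles.
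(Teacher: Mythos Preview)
Your proof is correct and follows essentially the same approach as the paper: the paper's proof consists solely of the same derivative computation showing $\tfrac{d}{d\tau}\mu(\phi_\tau(x))=-1$. In fact you supply more than the paper does, since the paper's proof omits any argument for the $S^1$-equivariance of $\phi_\tau$, which you justify correctly from the invariance of $g$ and $\mu$.
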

\begin{proof}
\[
\begin{split}
\dfrac{d}{d\tau}\mu\left(\phi_\tau(x)\right)&=d\mu\left(\frac{1}{\norm{\grad(\mu)}^2}\grad(\mu)\right)\\
&=-g\left(\grad(\mu),\frac{1}{\norm{\grad(\mu)}^2}\grad(\mu)\right)\\
&=-1. 
\end{split}
\]
\end{proof}

By the definition of the symplectic cutting, the normal bundle $\nu$ of $M_n$ in $\overline{M}_{\mu\le n}$ is given by 
\[
\nu=\mu^{-1}(N)\times_{S^1}\C_{(1)}.
\]
For a sufficiently small $\varepsilon>0$ let $D_\varepsilon (\C_{(1)})=\{ z\in \C_{(1)}\colon \abs{z}<\varepsilon\}$ be the open disc of radius $\varepsilon$. We put $D_\varepsilon(\nu)=\mu^{-1}(n)\times_{S^1}D_\varepsilon(\C_{(1)})$, and define an $S^1$-action on $D_\varepsilon(\nu)$ by 
\begin{equation}\label{action on nu}
t[x,z]=[tx,z]. 
\end{equation}
Let $p\colon D_\varepsilon (\nu)\to M_n$ be the natural projection. We define a complex line bundle $L_{D_\varepsilon (\nu)}$ on $D_\varepsilon (\nu)$ by 
\[
L_{D_\varepsilon (\nu)}=p^*L_n\otimes \left(\mu^{-1}(n)\times_{S^1}(D_\varepsilon (\C_{(1)})\times \C_{(0)}) \right), 
\]
and define an lift of the $S^1$-action~\eqref{action on nu} to $L_{D_\varepsilon (\nu)}$ by 
\begin{equation}\label{action on nuL}
t\left(\left([x,z], [u\otimes v]\right)\otimes [x',z',w]\right)=\left(\left([tx,z],[(tu)\otimes v]\right)\otimes [tx',z',w]\right). 
\end{equation}
Then we can show the following lemma. 
\begin{lemma}\label{local model}
\textup{(1)} For a sufficiently small $\varepsilon>0$ there exists an $S^1$-equivariant embedding $f_M\colon D_\varepsilon (\nu)\to \overline{M}_{\mu\le n}$ with respect to the actions~\eqref{action on nu} and \eqref{actionMbar}. \\
\textup{(2)} Under the assumption in \textup{(1)} there exists an $S^1$-equivariant bundle map $f_L\colon L_{D_\varepsilon (\nu)}\to \overline{L}$ with respect to the actions~\eqref{action on nuL} and~\eqref{actionLbar} such that $f_L$ covers $f_M$.  
\end{lemma}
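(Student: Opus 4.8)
The statement to prove is Lemma~\ref{local model}: the existence of an $S^1$-equivariant tubular neighborhood of $M_n$ in $\overline M_{\mu\le n}$ modeled on $D_\varepsilon(\nu)$, together with an equivariant lift to the prequantum line bundles.

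\medskip

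\textbf{Plan for part (1).} The idea is to build the embedding $f_M$ explicitly using the flow $\phi_\tau$ from the preceding lemma rather than invoking an abstract equivariant tubular neighborhood theorem, because I need tight control on how the construction interacts with the $\C_{(1)}$-factor. First I recall the identification of Proposition~\ref{prequantum condition on M}(2): a point $[x,z]\in\overline M_{\mu\le n}$ with $z\neq 0$ corresponds to the unique $S^1$-representative with $\mu(x)+\abs z^2=n$, and $[x,0]\in M_n$ corresponds to $[x]$. So I want a map that sends $[x,z]\in D_\varepsilon(\nu)=\mu^{-1}(n)\times_{S^1}D_\varepsilon(\C_{(1)})$ to a point of $\overline M_{\mu\le n}$ lying over $\overline\mu=n-\abs z^2$. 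Given $x\in\mu^{-1}(n)$ and $z\in D_\varepsilon(\C_{(1)})$, I set $f_M([x,z])=[\phi_{-\abs z^2}(x),z]$, where $\phi_{-\abs z^2}(x)\in\mu^{-1}(n+\abs z^2)$ wait that has the wrong sign; using $\mu(\phi_\tau(x))=\mu(x)-\tau$ I take $\tau=-\abs z^2$ so that $\phi_{-\abs z^2}(x)\in\mu^{-1}(n+\abs z^2)$, and then $\mu(\phi_{-\abs z^2}(x))+\abs z^2 = n+2\abs z^2\neq n$, so this naive formula does not land in the constraint set. The correct device is: for $z$ with $\abs z<\varepsilon$ put $f_M([x,z])=\bigl[\phi_{\abs z^2}(x),\,z\bigr]$ when $\abs z>0$ and $f_M([x,0])=[x,0]$; then $\mu(\phi_{\abs z^2}(x))=n-\abs z^2$ so $\mu(\phi_{\abs z^2}(x))+\abs z^2=n$ and the point lies in $\Phi^{-1}(0)$. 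I must check: (i) well-definedness over the Borel construction, which follows because $\phi_\tau$ commutes with the $S^1$-action (preceding lemma) and $\abs{z}$ is $S^1$-invariant, so $[tx,z]\mapsto[\phi_{\abs z^2}(tx),z]=[t\phi_{\abs z^2}(x),z]=t[\phi_{\abs z^2}(x),z]$, giving equivariance for \eqref{action on nu} vs.\ \eqref{actionMbar}; (ii) smoothness across $z=0$, which is the one genuinely delicate point — the map $z\mapsto\phi_{\abs z^2}(x)$ has a $\abs z^2$ that is smooth in $z\in\C$, so the composite is smooth; (iii) injectivity and immersion near $M_n$, which hold for $\varepsilon$ small because $f_M$ restricts to the identity on $M_n$ and its normal derivative there is the identity on $\nu$.

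\medskip

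\textbf{Plan for part (2).} For the line bundle lift I work fiberwise over the previous construction. Recall from Proposition~\ref{prequantum condition on L} that $\overline L=\bigl((L\otimes\C_{(n)})\boxtimes(\C_{(1)}\times\C_{(0)},\,d+\tfrac12(zd\bar z-\bar z dz))\bigr)\big|_{\Phi^{-1}(0)}/S^1$, and that $L_n$ is its restriction to $M_n$, i.e.\ $(L|_{\mu^{-1}(n)}\otimes\C_{(n)})/S^1$. Over a point $[x,z]\in D_\varepsilon(\nu)$ with image $[\phi_{\abs z^2}(x),z]$, I need to identify the fiber of $L_{D_\varepsilon(\nu)}=p^*L_n\otimes(\mu^{-1}(n)\times_{S^1}(D_\varepsilon(\C_{(1)})\times\C_{(0)}))$ with the fiber of $\overline L$. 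The first tensor factor $p^*L_n$ over $[x,z]$ is $(L_x\otimes\C_{(n)})$; I transport it to $(L_{\phi_{\abs z^2}(x)}\otimes\C_{(n)})$ using parallel transport $P_{\phi}$ along the flow curve $s\mapsto\phi_{s\abs z^2}(x)$ with respect to $\nabla^L$ (this is $S^1$-equivariant because the flow and the connection are $S^1$-invariant, and is trivial at $z=0$). The second factor $\mu^{-1}(n)\times_{S^1}(D_\varepsilon(\C_{(1)})\times\C_{(0)})$ carries exactly the same $\C_{(1)}\times\C_{(0)}$-data as in $\overline L$. Thus I define $f_L$ on $\bigl(([x,z],[u\otimes v])\otimes[x',z',w]\bigr)$ — here $[x',z',w]$ is in the second factor with $[x',z']$ matching $[x,z]$ — by sending it to $[P_\phi u\otimes v\boxtimes(z',w)]\in\overline L$. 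Equivariance of $f_L$ with respect to \eqref{action on nuL} and \eqref{actionLbar} then reduces to: $\phi$ commutes with $S^1$, $P_\phi$ intertwines the lifted $S^1$-actions on $L$ (which holds by $S^1$-invariance of $\nabla^L$), and $\abs z$ is $S^1$-invariant. That $f_L$ is a bundle isomorphism covering $f_M$ and that it is smooth across $z=0$ follow as in part (1), with the $\abs z^2$-dependence of the transport path smooth in $z\in\C$ and reducing to the identity identification $L_{D_\varepsilon(\nu)}|_{M_n}=L_n$ on the zero section.

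\medskip

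\textbf{Main obstacle.} The routine equivariance is straightforward given that $\phi_\tau$ and all structures are $S^1$-invariant; the one place requiring care is the \emph{smoothness of both $f_M$ and $f_L$ across the zero section} $M_n=\{z=0\}$, where the defining formulas involve $\phi_{\abs z^2}$ and parallel transport over a path of length $\abs z^2$. The point is that $\abs z^2=z\bar z$ is a smooth (indeed real-analytic) function of $z\in\C$ vanishing to second order, and the flow map $(\tau,x)\mapsto\phi_\tau(x)$ together with parallel transport $(\tau,x)\mapsto P_\tau$ are smooth in $\tau$ on $\mu^{-1}((n-\varepsilon,n+\varepsilon))$ by smooth dependence of ODE solutions on time; composing a smooth function of $z$ into a smooth function of $\tau$ gives smoothness, and a direct computation of the first-order jet along $z=0$ shows $df_M$ restricted to the normal direction is the identity on $\nu=\mu^{-1}(n)\times_{S^1}\C_{(1)}$, so shrinking $\varepsilon$ makes $f_M$ an embedding and $f_L$ a bundle isomorphism over its image. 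Everything else — well-definedness on Borel quotients, that the image lies in $\Phi^{-1}(0)/S^1$, and compatibility of the two connections up to the deformation invariance already built into the local index — is a direct check.
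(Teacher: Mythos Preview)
Your proposal is correct and follows essentially the same construction as the paper: both define $f_M([x,z])=[\phi_{\abs z^2}(x),z]$ using the (normalized) negative gradient flow of $\mu$, and both lift it to $\overline L$ via parallel transport of $\nabla^L$ along the flow curve (what the paper writes as the horizontal lift $\tilde\phi_{\abs z^2}$). The paper's proof simply records the two formulas and asserts they work; your write-up supplies the verifications (well-definedness on the Borel quotients, $S^1$-equivariance from $S^1$-invariance of $\phi_\tau$ and $\nabla^L$, smoothness through $z=0$ via the smoothness of $\abs z^2$, and the immersion check along $M_n$), so the only difference is level of detail. One small point worth tightening: when you say the fiber of $p^*L_n$ over $[x,z]$ ``is'' $L_x\otimes\C_{(n)}$, that identification depends on the choice of representative $x$; the paper handles this by introducing the unique $t_0\in S^1$ with $t_0u\in L_x$ before transporting, and your formula for $f_L$ should incorporate the same normalization to be literally well defined.
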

\begin{proof}
We define $f_M$ and $f_L$ by 
\[
\begin{split}
&f_M([x,z])=[\phi_{\abs{z}^2}(x),z],\\
&f_L(\left([x,z], [u\otimes v]\right)\otimes [x',z',w])=\left[\left(\tilde \phi_{\abs{z}^2}(t_0u)\otimes t_0^nv\right)\otimes (z,w)\right], 
\end{split}
\]
where $t_0\in S^1$ is the unique element that satisfies $t_0u\in L_x$ and $\tilde \phi_\tau(t_0u)$ is the horizontal lift of $\phi_\tau(x)$ starting from $t_0u$ and $\tilde \phi_{\abs{z}^2}(t_0u)$ is its value at $\tau=\abs{z}^2$. Then they are required maps. 
\end{proof}
%
%

\begin{proof}[Proof of Theorem~\ref{Main}]
By Lemma~\ref{local model} and the equivariant version of the product formula~\cite[Theorem~8.8]{FFY2} we obtain 
\begin{align}
&\ind_{S^1}\left(O,O\setminus M_n;\overline{L}|_O\right)\nonumber \\
&=\ind_{S^1}\left(D_\varepsilon (\nu),\ D_\varepsilon (\nu)\setminus M_n;L_{D_\varepsilon (\nu)}\right)\nonumber \\
&=\ind_{S^1}\left(M_n;L_n\otimes \mu^{-1}(n)\times_{S^1}\ind_{S^1}(D_\varepsilon(\C_{(1)}),D_\varepsilon(\C_{(1)})\setminus \{0\};D_\varepsilon(\C_{(1)})\times \C_{(0)})\right).
\end{align}
Note that the product formula for the $S^1$-equivariant local index holds since the $S^1$-action preserves all the data. See \cite[Section~6.2]{FFY3} for more details. From Example~\ref{disc} the equivariant local index $\ind_{S^1}(D_\varepsilon(\C_{(1)}),D_\varepsilon(\C_{(1)})\setminus \{0\};D_\varepsilon(\C_{(1)})\times \C_{(0)})$ is equal to $\C_{(0)}$. By definition, $L_n$ is naturally identified with the restriction of $\overline{L}$ to $M_n$. With this identification we can see that the restriction of the $S^1$-action~\eqref{actionLbar} to $L_n\to M_n$ is nothing but the fiberwise multiplication of $t^{-n}$. Since the $S^1$-action on $\ind_{S^1}(M_n;L_n)$ is defined by the pull-back, the $S^1$-action on $\ind_{S^1}(M_n;L_n)$ is given by the multiplication of $t^n$ as we mentioned in Remark~\ref{actionLn}. This proves the theorem. 
\end{proof}

\section{Examples}\label{examples}
\begin{example}[Complex projective space]\label{CPm}
As $(L,\nabla)\to (M,\omega)$ we adopt 
\[
\left((\C_{(1)})^m\times \C_{(0)},d+\frac{1}{2}\sum_{i=1}^m(z_id\bar z_i-\bar z_idz_i)\right)\to \left((\C_{(1)})^m,\frac{\sqrt{-1}}{2\pi}\sum_{i=1}^mdz_i\wedge d\bar z_i\right)
\]
equipped with the diagonal $S^1$-action on $(\C_{(1)})^m$ and its trivial lift to $(\C_{(1)})^m\times \C_{(0)}$. For $n=1$ the obtained $\overline{M}_{\mu\le n}$, $\overline{L}$, and $M_n$ are $\C P^m$, $\CO(1)$, and $\C P^{m-1}$, respectively. The induced $S^1$-actions on $\overline{M}_{\mu\le n}$ and $\overline{L}$ are given by
\begin{equation}\label{actionCPm}
\begin{split}
&t[z_1:\cdots :z_m:z_{m+1}]=[tz_1:\cdots :tz_m:z_{m+1}],\\
&t[z_1:\cdots :z_m:z_{m+1},w]=[tz_1:\cdots :tz_m:z_{m+1},w]. 
\end{split}
\end{equation}
The moment map $\overline{\mu}$ associated to the $S^1$-action~\eqref{actionCPm} is given by $\overline{\mu}([z_1:\cdots :z_{m+1}])=\sum_{i=1}^m\abs{z_i}^2$. For $k=0, 1$ let $O_k$ be a sufficient small $S^1$-invariant open neighborhood of $\overline{\mu}^{-1}(k)$. 
Then the equivariant local index $\ind_{S^1}\left(O_k,O_k\setminus \overline{\mu}^{-1}(k);\overline{L}|_{O_k}\right)$ is defined and By Corollary~\ref{localization} the equivariant Riemann-Roch index $\ind_{S^1}\left(\overline{M}_{\mu\le n},\overline{L}\right)$ satisfies following equality
\begin{equation}\label{localization formula}
\ind_{S^1}\left(\overline{M}_{\mu\le n},\overline{L}\right)=\ind_{S^1}\left(O_0,O_0\setminus \overline{\mu}^{-1}(0);\overline{L}|_{O_0}\right)+\ind_{S^1}\left(O_1,O_1\setminus \overline{\mu}^{-1}(1);\overline{L}|_{O_1}\right). 
\end{equation}

The left hand side is computed as
\begin{equation}\label{equiv index CPm}
\ind_{S^1}\left(\overline{M}_{\mu\le n},\overline{L}\right)=\ind_{S^1}\left(\C P^m,\CO(1)\right)=\C_{(0)}\oplus m\C_{(1)}. 
\end{equation}

For $k=1$, since $\overline{\mu}^{-1}(1)=M_n$, by Theorem~\ref{Main} 
$\ind_{S^1}\left(O_1,O_1\setminus \overline{\mu}^{-1}(1);\overline{L}|_{O_1}\right)$ is given as 
\begin{equation}\label{equiv index ON}
\begin{split}
\ind_{S^1}\left(O_1,O_1\setminus \overline{\mu}^{-1}(1);\overline{L}|_{O_1}\right)&=\ind_{S^1}\left(O_1,O_1\setminus M_n;\overline{L}|_{O_1}\right)\\
&=\ind(\CP^{m-1};\CO(1))\C_{(1)}\\
&=m\C_{(1)}. 
\end{split}
\end{equation}


For $k=0$, it is easy to see that $\overline{\mu}^{-1}(0)=\{ [z_0:0:\cdots :0] \}$ and $(\overline{L},\nabla^{\overline{L}})|_{[z_0:0:\cdots :0]}\cong (\C_{(0)},d+\frac{1}{2}(\bar zdz-zd\bar z))$. We can take $O_0$ in such a way that $O_0$ is identified with a sufficiently small open disc $D=\{ (z_1,\ldots ,z_m)\in \C^m\colon \sum_{i=1}^m\abs{z_i}^2\le \varepsilon \}$ with $S^1$-action $t(z_1,\ldots ,z_m)=(tz_1,\ldots ,tz_m)$. Then, by~\eqref{localization formula}, ~\eqref{equiv index CPm}, and ~\eqref{equiv index ON} 
we obtain the following formula 
\[
\ind_{S^1}\left(D,D\setminus \{ 0\};D\times \C_0\right)=\C_{(0)}. 
\]
In the case of $m=1$ this formula can be obtained in \cite[Remark~6.10]{FFY1} and \cite[Section~5.3.2]{Y7}.  
\end{example}

\begin{example}[Exceptional divisor]
Let $n$ and $(L,\nabla)\to (M,\omega)$ be as in Example~\ref{CPm}. Then the obtained cut space $\overline{M}_{\mu\ge n}$ is the blow-up $\tilde \C^m$ of the origin in $\C^m$, and $M_n$ and $L_n$ are the exceptional divisor $\C P^{m-1}$ and $\CO(n)$, respectively. We take a sufficiently small invariant open neighborhood $O$ of $M_n$. Then, by Theorem~\ref{Main} the equivariant local index $\ind_{S^1}\left(O,O\setminus M_n;\overline{L}|_O\right)$ is given by
\[
\ind_{S^1}\left(O,O\setminus M_n;\overline{L}|_O\right)=\ind\left(\CP^{m-1};\CO(n)\right)\C_{(n)}=\binom{m-1+n}{m-1}\C_{(n)}. 
\]
\end{example}
\bibliographystyle{amsplain}
\bibliography{references}
\end{document}